\documentclass[12pt]{amsart}

\setlength{\textheight}{23cm}
\setlength{\textwidth}{16cm}
\setlength{\topmargin}{-0.8cm}
\setlength{\parskip}{0.3\baselineskip}
\hoffset=-1.4cm

\usepackage{amssymb}

\newtheorem{theorem}{Theorem}[section]

\newtheorem{lemma}[theorem]{Lemma}
\newtheorem{remark}[theorem]{Remark}

\numberwithin{equation}{section}

\begin{document}
\baselineskip=15.5pt

\title[Irreducible representations of cocompact lattices of $\text{SL}(2,{\mathbb
C})$]{On the vector bundles associated to the irreducible representations
of cocompact lattices of $\text{SL}(2,{\mathbb C})$}

\author[I. Biswas]{Indranil Biswas}

\address{School of Mathematics, Tata Institute of Fundamental
Research, Homi Bhabha Road, Bombay 400005, India}

\email{indranil@math.tifr.res.in}

\author[A. Mukherjee]{Avijit Mukherjee}

\address{Department of Physics,
Jadavpur University, Raja S. C. Mullick Road, Jadavpur,
Kolkata 700032, India}

\email{avijit00@gmail.com}

\subjclass[2000]{81T30, 14D21, 53C07}

\keywords{Strominger system, polystability,
cocompact lattice, irreducible representation}

\date{}

\begin{abstract}
In this continuation of \cite{BM}, we prove the following: Let
$\Gamma\, \subset\, \text{SL}(2,{\mathbb C})$ be a cocompact lattice, and
let $\rho\,:\, \Gamma\, \longrightarrow\, \text{GL}(r,{\mathbb C})$ be
an irreducible representation. Then the holomorphic vector bundle
$E_\rho\, \longrightarrow\, \text{SL}(2,{\mathbb C})/\Gamma$ associated
to $\rho$ is polystable. The compact complex manifold
$\text{SL}(2,{\mathbb C})/\Gamma$ has natural Hermitian structures; the
polystability of $E_\rho$ is with respect to these natural Hermitian
structures. In \cite{BM} it was shown that if $\rho(\Gamma)
\,\subset\, \text{U}(r)$, then $E_\rho$ equipped with the Hermitian
structure given by $\rho$, and $\text{SL}(2,{\mathbb C})/\Gamma$
equipped with a natural Hermitian structure, together produce a solution
of the Strominger system of equations. A polystable vector bundle also has
a natural Hermitian structure, which is known as the Hermitian--Yang--Mills
structure. It would be interesting to find similar applications of the
Hermitian--Yang--Mills structure on the above polystable vector bundle $E_\rho$.
We show that the polystable vector bundle $E_\rho$ is not stable in general.
\end{abstract}

\maketitle

\section{Introduction}\label{sec1}

We first recall the set--up, and some results, of \cite{BM}. Let
$$
\Gamma\, \subset\, \text{SL}(2,{\mathbb C})
$$
be a discrete cocompact subgroup. Fixing a $\text{SU}(2)$-invariant
Hermitian form on the Lie algebra
$sl(2,{\mathbb C})$, we get a Hermitian structure
$h$ on the compact complex manifold $M\,:=\, \text{SL}(2,{\mathbb C})/\Gamma$.
The $(1\, ,1)$--form $\omega_h$ on $M$ associated to $h$ satisfies the
identity $d\omega^2_h\,=\, 0$. Take any homomorphism
$$
\rho\,:\, \Gamma\, \longrightarrow\, \text{GL}(r,{\mathbb C})\, .
$$
This $\rho$ produces a holomorphic vector bundle $E_\rho$ of rank $r$ on $M$
equipped with a flat holomorphic connection $\nabla^\rho$.
The homomorphism $\rho$ is called irreducible if $\rho(\Gamma)$ is not contained
in some proper parabolic subgroup of $\text{GL}(r,{\mathbb C})$.

If $\rho(\Gamma)\, \subset\, \text{U}(r)$, then $E_\rho$ is equipped with
a Hermitian structure $H^\rho$ such that the associated Chern connection is
$\nabla^\rho$.

If
\begin{itemize}
\item $\rho(\Gamma)\, \subset\, \text{U}(r)$ and

\item $\rho(\Gamma)$ is irreducible,
\end{itemize}
then the quadruple $(M\, ,h\, ,E_\rho\, ,H^\rho)$ satisfies the Strominger
system of equations \cite[Theorem 4.6]{BM}. In particular, the
vector bundle $E_\rho$ is stable \cite[Proposition 4.5]{BM}.

Now assume that $\rho$ is irreducible, but do \textit{not} assume that
$\rho(\Gamma)\, \subset\, \text{U}(r)$. Our aim here is to prove the
following (see Theorem \ref{thm1}):

\textit{The holomorphic vector bundle $E_\rho$ is polystable with respect
to the Hermitian structure $h$ on $M$.}

It is known that under some minor condition, the group $\Gamma$ admits some
free groups of more than one generators as quotients \cite[p. 3393,
Theorem 2.1]{La}. Therefore, there are many examples of pairs $(\Gamma\, ,\rho)$
of the above type satisfying the irreducibility condition.

Since $E_\rho$ is polystable, the holomorphic vector bundle $E_\rho$ has
an Hermitian--Yang--Mills structure ${\mathcal H}^\rho$ \cite{LY} (see also
\cite{Bu}). It may be worthwhile to investigate this Hermitian
structure ${\mathcal H}^\rho$. We should clarify that ${\mathcal H}^\rho$
need not be flat. An Hermitian--Yang--Mills structure on a polystable vector
bundle with vanishing Chern classes over a compact K\"ahler manifold is flat,
but $M$ is not K\"ahler.

It is natural to ask whether the polystable vector bundle $E_\rho$ is stable.
If we take $\rho$ to be the inclusion of $\Gamma$ in $\text{SL}(2,{\mathbb C})$,
then $\rho$ is irreducible, but the associated holomorphic vector bundle $E_\rho$
is holomorphically trivial, in particular, $E_\rho$ is not stable
(see Lemma \ref{lem1} for the details).

Infinitesimal deformations of the complex structure of $M$ are investigated
in \cite{Ra}.

\section{Polystability of associated vector bundle}

The Lie algebra of $\text{SL}(2,{\mathbb C})$, which will be denoted by
$sl(2,{\mathbb C})$, is the space of complex $2\times 2$ matrices of trace zero.
Consider the adjoint action of $\text{SU}(2)$ on $sl(2,{\mathbb C})$.
Fix an inner product $h_0$ on $sl(2,{\mathbb C})$ preserved by this
action; for example, we may take the Hermitian form
$(A\, ,B)\,\longmapsto\, \text{trace}(AB^*)$ on $sl(2,{\mathbb C})$.
Let $h_1$ be the Hermitian structure on $\text{SL}(2,{\mathbb C})$
obtained by right--translating the Hermitian form $h_0$ on
$T_{\rm Id}\text{SL}(2,{\mathbb C})\,=\, sl(2,{\mathbb C})$.

Let $\Gamma$ be a cocompact lattice in $\text{SL}(2,{\mathbb C})$. So
$\Gamma$ is a discrete subgroup of $\text{SL}(2,{\mathbb C})$ such that the
quotient
\begin{equation}\label{e1}
M\,:=\, \text{SL}(2,{\mathbb C})/\Gamma
\end{equation}
is compact. This $M$ is a compact complex manifold of complex
dimension three. The left--translation action of $\text{SL}(2,
{\mathbb C})$ on itself descends to an action of $\text{SL}(2,{\mathbb C})$ on
$M$. We will call this action of $\text{SL}(2,{\mathbb C})$
on $M$ the \textit{left--translation action}.
The Hermitian structure $h_1$ on $\text{SL}(2,{\mathbb C})$ descends to an
Hermitian structure on $M$. This descended Hermitian structure on $M$ will be
denoted by $h$. Let $\omega_h$ be the $C^\infty$ $(1\, ,1)$--form on $M$
associated to $h$. Then
$$
d\omega^2_h\,=\, 0
$$
\cite[Corollary 4.1]{BM}.

For a torsionfree nonzero coherent analytic sheaf $F$ on $M$, define
$$
\text{degree}(F)\,:=\, \int_M c_1(F)\wedge \omega^2_h\, \in\, {\mathbb R}
~\ \text{ and }~\ \mu(F)\,:=\, \frac{\text{degree}(F)}{\text{rank}(F)}
\, \in\, {\mathbb R}\, .
$$
A torsionfree nonzero coherent analytic sheaf $F$ on $M$ is called \textit{stable}
(respectively, \textit{semistable}) if for every coherent analytic subsheaf
$$
V\,\subset\, F
$$
such the $\text{rank}(V)\, \in\, [1\, ,\text{rank}(F)-1]$ and the
quotient $F/V$ is torsionfree, the inequality
$$
\mu(V)\,< \, \mu(F)~\,\ \text{(respectively,~}\,
\mu(V)\,\leq \, \mu(F){\rm )}
$$
holds (see \cite[Ch.~V, \S~7]{Ko}). A torsionfree nonzero coherent analytic
sheaf $F$ on $M$ is called \textit{polystable} if it is semistable and is
isomorphic to a direct sum of stable sheaves.

\begin{remark}\label{rem2}
{\rm Since a polystable coherent analytic sheaf $F$ is semistable, if $F\,=\,
\bigoplus_{i=1}^\ell F_i$, then $\mu(F_i)\,=\,\mu(F)$ for all $i$.}
\end{remark}

Take any homomorphism
\begin{equation}\label{e2}
\rho\,:\, \Gamma\, \longrightarrow\, \text{GL}(r,{\mathbb C})\, .
\end{equation}
Let $(E_\rho\, ,\nabla^\rho)$ be the flat holomorphic vector bundle
of rank $r$ over $M$ associated
to the homomorphism $\rho$. We recall that the total space of
$E_\rho$ is the quotient of $\text{SL}(2,{\mathbb C})
\times {\mathbb C}^r$ where two points $$(z_1\, ,v_1)\, , (z_2\, ,v_2)\,\in\,
\text{SL}(2,{\mathbb C})\times {\mathbb C}^r$$ are identified if there is an
element $\gamma\, \in\, \Gamma$ such that $z_2\,=\, z_1\gamma$ and $v_2\,=\,
\rho(\gamma^{-1})(v_1)$. The trivial connection on the trivial vector bundle
$\text{SL}(2,{\mathbb C})\times {\mathbb C}^r\,\longrightarrow\,
\text{SL}(2,{\mathbb C})$ of rank $r$
descends to the connection $\nabla^\rho$. The left--translation
action of $\text{SL}(2,{\mathbb C})$ on $\text{SL}(2,{\mathbb C})$ and the trivial
action of $\text{SL}(2,{\mathbb C})$ on ${\mathbb C}^r$ together define an action
of $\text{SL}(2,{\mathbb C})$ on $\text{SL}(2,{\mathbb C})
\times {\mathbb C}^r$. This action
of $\text{SL}(2,{\mathbb C})$ on $\text{SL}(2,{\mathbb C})
\times {\mathbb C}^r$ descends to an action
\begin{equation}\label{ta}
\tau\, :\, \text{SL}(2,{\mathbb C})\times E_\rho\,\longrightarrow\,E_\rho
\end{equation}
of $\text{SL}(2,{\mathbb C})$
on the vector bundle $E_\rho$. The action $\tau$ in \eqref{ta}
is clearly a lift of the left--translation action of $\text{SL}(2,{\mathbb C})$ on $M$.

The homomorphism $\rho$ in \eqref{e2} is called \textit{reducible} if there a
nonzero linear subspace $S\,\subsetneq\, {\mathbb C}^r$ such that
$\rho(\Gamma)(S)\, =\, S$.
The homomorphism $\rho$ is called \textit{irreducible} if it is not reducible.

\begin{theorem}\label{thm1}
Assume that the homomorphism $\rho$ in \eqref{e2} is irreducible.
Then the corresponding holomorphic vector bundle $E_\rho$ is polystable.
\end{theorem}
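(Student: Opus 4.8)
Write $G \,=\, \text{SL}(2,{\mathbb C})$. The plan is to exploit the $G$--equivariant structure $\tau$ of \eqref{ta} on $E_\rho$, together with the transitivity of the left--translation action of $G$ on $M\,=\,G/\Gamma$, in order to reduce the polystability of $E_\rho$ to the irreducibility of $\rho$. First I would record that $\mu(E_\rho)\,=\,0$: the connection $\nabla^\rho$ is flat, so all real Chern classes of $E_\rho$ vanish, and in particular $\int_M c_1(E_\rho)\wedge\omega_h^2\,=\,0$.

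The decisive step is to show that the left--translation action preserves the degree of subsheaves. For $g\,\in\, G$ the map $\tau_g$ is a holomorphic automorphism of $E_\rho$ lying over the biholomorphism $L_g$ of $M$, so for any torsionfree coherent subsheaf $V\,\subset\, E_\rho$ the image $\tau_g(V)$ is again such a subsheaf, with $\text{rank}(\tau_g(V))\,=\,\text{rank}(V)$. Writing $c_1(\tau_g(V))\,=\,(L_g)_\ast c_1(V)$ and using the change--of--variables identity $\int_M (L_g)_\ast c_1(V)\wedge\omega_h^2\,=\,\int_M c_1(V)\wedge L_g^\ast\omega_h^2$, I would invoke $d\omega_h^2\,=\,0$ together with the connectedness of $G$: since $L_g$ is homotopic to the identity, $L_g^\ast\omega_h^2$ is cohomologous to $\omega_h^2$, and pairing the closed current $c_1(V)$ against a closed form depends only on its cohomology class. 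Hence $\int_M c_1(V)\wedge L_g^\ast\omega_h^2\,=\,\int_M c_1(V)\wedge\omega_h^2$, so $\mu(\tau_g(V))\,=\,\mu(V)$. I expect this to be the \emph{main obstacle}, precisely because $\omega_h$ itself is \emph{not} $G$--invariant: the form $h_0$ is only $\text{SU}(2)$--invariant, not $\text{Ad}$--invariant for all of $G$, so there is no pointwise invariance to exploit; the argument succeeds only by replacing pointwise invariance with cohomological invariance, which is available exactly because $\omega_h^2$ is closed and $G$ is connected.

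With ranks and slopes of subsheaves preserved by $G$, the uniquely characterized subsheaves of $E_\rho$ become $G$--invariant. Applying the Harder--Narasimhan theory for torsionfree sheaves with respect to the Gauduchon metric $\omega_h$, let $F_{\max}\,\subset\, E_\rho$ be the maximal destabilizing subsheaf. For every $g$ the subsheaf $\tau_g(F_{\max})$ is again maximal destabilizing, so uniqueness forces $\tau_g(F_{\max})\,=\,F_{\max}$; thus $F_{\max}$ is $G$--invariant. Since the $G$--action on $M$ is transitive, any proper $G$--invariant analytic subset of $M$ is empty, so (after passing to the saturation, which is again unique and hence $G$--invariant) $F_{\max}$ is a holomorphic subbundle of the homogeneous bundle $E_\rho\,=\, G\times_\Gamma{\mathbb C}^r$. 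Such a $G$--invariant subbundle is determined by its fiber $S\,\subseteq\,{\mathbb C}^r$ over the base point $e\Gamma$, and $G$--invariance forces $S$ to be invariant under the isotropy representation, which is $\rho(\Gamma)$. By irreducibility of $\rho$ we get $S\,=\,0$ or $S\,=\,{\mathbb C}^r$, i.e.\ $F_{\max}\,=\,0$ or $F_{\max}\,=\,E_\rho$; as $F_{\max}\,\neq\,0$, we conclude $F_{\max}\,=\,E_\rho$, whence $E_\rho$ is semistable with $\mu(E_\rho)\,=\,0$.

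Finally, for polystability I would run the same uniqueness argument on the socle $E_0\,\subseteq\, E_\rho$, the maximal polystable subsheaf of slope $\mu(E_\rho)$. Being uniquely characterized, $E_0$ is $G$--invariant by the preceding paragraph, hence corresponds to a $\rho(\Gamma)$--invariant subspace of ${\mathbb C}^r$, and therefore equals $0$ or $E_\rho$. The socle of a nonzero semistable sheaf is nonzero, so $E_0\,=\,E_\rho$, and consequently $E_\rho$ is polystable, completing the proof.
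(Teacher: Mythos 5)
Your proposal is correct, and its overall skeleton is the same as the paper's: maximal destabilizing subsheaf and then the socle, uniqueness of each giving invariance under the action $\tau$, transitivity of the left--translation action forcing these subsheaves to be subbundles, and irreducibility of $\rho$ forcing them to be $0$ or all of $E_\rho$. You deviate at two points, both worth noting. First, the paper passes from uniqueness of the Harder--Narasimhan filtration to $\tau(\{g\}\times W_1)\,=\,W_1$ with the words ``follows immediately,'' whereas you correctly flag that this step needs a justification: $L_g$ is \emph{not} an isometry of $(M,h)$ (the inner product $h_0$ is only $\text{SU}(2)$--invariant, and there is no $\text{Ad}(\text{SL}(2,{\mathbb C}))$--invariant Hermitian inner product on $sl(2,{\mathbb C})$), so a priori $\tau_g$ carries the Harder--Narasimhan filtration for $\omega_h$ to the one for the pulled--back metric. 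Your repair --- degrees are integrals against the \emph{closed} form $\omega_h^2$, the connectedness of $\text{SL}(2,{\mathbb C})$ makes $L_g$ homotopic to the identity, hence $L_g^\ast\omega_h^2$ is cohomologous to $\omega_h^2$ and slopes of subsheaves are preserved --- is exactly the right argument, and it is what makes the paper's ``immediate'' step honest; this is where the hypothesis $d\omega_h^2\,=\,0$ from \cite[Corollary 4.1]{BM} is genuinely used. Second, where the paper shows that the invariant subbundle is preserved by the flat connection $\nabla^\rho$ (by describing flat sections upstairs as orbits of the $\text{SL}(2,{\mathbb C})$--action) and then appeals to irreducibility through flat subbundles, you instead observe directly that a $\tau$--invariant subbundle of $E_\rho$ pulls back to $\text{SL}(2,{\mathbb C})\times S$ with $S$ a $\rho(\Gamma)$--invariant subspace of ${\mathbb C}^r$ (the isotropy group at $e\Gamma$ being $\Gamma$, acting on the fiber through $\rho$). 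The two finishes carry the same content, but yours is more economical, bypassing the connection $\nabla^\rho$ entirely. One small simplification: the maximal destabilizing subsheaf is saturated by construction (the quotient by it is torsionfree), so your parenthetical ``after passing to the saturation'' is unnecessary.
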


\begin{proof}
Since $E_\rho$ has a flat connection, the Chern class
$c_1(\det E_\rho)\,=\, c_1(E_\rho)\,\in\, H^2(M,\, {\mathbb R})$ vanishes.
Hence we have $\text{degree}(E_\rho)\,=\, 0$ (see
\cite[Lemma 4.2]{BM}).

We will first show that $E_\rho$ is semistable. Assume that $E_\rho$ is not
semistable. Let
\begin{equation}\label{g1}
0\, \subset\, W_1\,\subset\, \cdots \,\subset\, W_{\ell-1}\,\subset\,
W_\ell\,=\, E_\rho
\end{equation}
be the Harder--Narasimhan filtration $E_\rho$; see \cite{Br} for the
construction of the Harder--Narasimhan filtration of vector bundles on
compact complex manifolds. Since $E_\rho$ is not semistable, we have
$\ell\, \geq\, 2$ and $W_1\,\not=\, 0$.

Consider the action $\tau$ of $\text{SL}(2,{\mathbb C})$ on $E_\rho$ constructed
in \eqref{ta}. From the uniqueness of the Harder--Narasimhan filtration it
follows immediately that $\tau(\{g\}\times W_1)\,=\, W_1$
for every $g\, \in\, \text{SL}(2,{\mathbb C})$. Therefore, we have 
\begin{equation}\label{g2}
\tau(\text{SL}(2,{\mathbb C})\times W_1)\,=\, W_1\, .
\end{equation}
Let $C(W_1)\, \subsetneq\, M$ be the closed subset over which $W_1$ fails to
be locally free. Since $\tau$ is a lift of the left--translation action of
$\text{SL}(2,{\mathbb C})$ on $M$, from \eqref{g2} we conclude that
$C(W_1)$ is preserved by the left--translation action of
$\text{SL}(2,{\mathbb C})$ on $M$. As the left--translation action of
$\text{SL}(2,{\mathbb C})$ on $M$ is transitive, it follows that
$C(W_1)$ is the empty set. Therefore, $W_1$ is a holomorphic vector bundle
on $M$. Similarly, the closed proper subset of $M$ over which $W_1$ fails to be
a subbundle of $E_\rho$ is preserved the left--translation action of
$\text{SL}(2,{\mathbb C})$ on $M$. Hence this subset is empty, and
$W_1$ is a holomorphic subbundle of $E_\rho$.

We will show that the flat connection $\nabla^\rho$ on $E_\rho$ preserves
the subbundle $W_1$ in \eqref{g1}.

To show that $\nabla^\rho$ preserves $W_1$, first note that
the flat sections of the trivial connection on the trivial vector bundle
$\text{SL}(2,{\mathbb C})\times {\mathbb C}^r\,\longrightarrow\,
\text{SL}(2,{\mathbb C})$ are of the form
$$
\text{SL}(2,{\mathbb C})\,\longrightarrow\,\text{SL}(2,{\mathbb C})\times
{\mathbb C}^r\, , ~ \  g\, \longmapsto\, (g\, ,v_0)\, ,
$$
where $v_0\,\in\, {\mathbb C}^r$ is independent of $g$. On the other hand,
the image of such a section is an orbit for the action of
$\text{SL}(2,{\mathbb C})$ on $\text{SL}(2,{\mathbb C})\times
{\mathbb C}^r$; recall that the action of $\text{SL}(2,{\mathbb C})$
on $\text{SL}(2,{\mathbb C})\times {\mathbb C}^r$ is the diagonal one
for the left--translation action of $\text{SL}(2,{\mathbb C})$
on itself and the trivial action of $\text{SL}(2,{\mathbb C})$
on ${\mathbb C}^r$ (see the construction of $\tau$ in \eqref{ta}). Also,
recall that the connection $\nabla^\rho$ on $E_\rho$ is the descent
of the trivial connection on the trivial vector bundle
$\text{SL}(2,{\mathbb C})\times {\mathbb C}^r\,\longrightarrow\,
\text{SL}(2,{\mathbb C})$. Combining these, from \eqref{g2} we conclude
that $\nabla^\rho$ preserves $W_1$.

The homomorphism $\rho$ is given to be irreducible. Therefore, the only
holomorphic subbundles of $E_\rho$ that are preserved by the associated connection
$\nabla^\rho$ are $0$ and $E_\rho$ itself. But 
$\ell\, \geq\, 2$ and $W_1\,\not=\, 0$ in \eqref{g1}. So $W_1$
neither $0$ nor $E_\rho$.

In view of the above contradiction, we conclude that
the holomorphic vector bundle $E_\rho$ is semistable.

We will now prove that $E_\rho$ is polystable.

Consider all nonzero coherent analytic subsheaves $V$ of $E_\rho$ such that
\begin{itemize}
\item $V$ is polystable, and

\item $\text{degree}(V)\,=\, 0$.
\end{itemize}
Let
\begin{equation}\label{f1}
{\mathcal F}\, \subset\, E_\rho
\end{equation}
be the coherent analytic subsheaf generated by all $V$ satisfying the above
two conditions. It is know that $\mathcal F$ is polystable with
$\mu({\mathcal F})\,=\, \mu(E_\rho)\,=\, 0$ (see \cite[page 23, Lemma 1.5.5]{HL}).
Therefore, the subsheaf ${\mathcal F}$ is uniquely characterized as follows:
the subsheaf ${\mathcal F}$ is the unique maximal coherent analytic subsheaf of
$E_\rho$ such that
\begin{itemize}
\item $\mathcal F$ is polystable, and

\item $\text{degree}({\mathcal F})\,=\, 0$.
\end{itemize}
Note that the quotient $E_\rho/{\mathcal F}$ is torsionfree, because
if $T\, \subset\, E_\rho/{\mathcal F}$ is the torsion part, then
$\varphi^{-1}(T)\, \subset\, E_\rho$, where
$$
\varphi\, :\, E_\rho\,\longrightarrow\, E_\rho/{\mathcal F}
$$
is the quotient map, also satisfies the above two conditions, while
${\mathcal F}\, \subsetneq \, \varphi^{-1}(T)$ if $T\,\not=\, 0$.

Consider the action $\tau$ of $\text{SL}(2,{\mathbb C})$ on
$E_\rho$ constructed in \eqref{ta}. From the above characterization of
the subsheaf ${\mathcal F}$ in \eqref{f1} it follows immediately that
\begin{equation}\label{f2}
\tau(\text{SL}(2,{\mathbb C})\times {\mathcal F})\,=\,
{\mathcal F}\, .
\end{equation}
As it was done for $W_1$, from \eqref{f2} we conclude that ${\mathcal F}$ is a
holomorphic subbundle of $E_\rho$.

As it was done for $W_1$, from \eqref{f2} it follows that the flat connection
$\nabla^\rho$ on $E_\rho$ preserves the subbundle $\mathcal F$ in \eqref{f1}.
Since $\rho$ is irreducible, either ${\mathcal F}\,=\,0$ or
${\mathcal F}\,=\,E_\rho$. The rank of $\mathcal F$ is
at least one because the semistable vector bundle $E_\rho$ of degree zero
has a nonzero stable subsheaf of degree zero. Therefore, we conclude
that ${\mathcal F}\,=\, E_\rho$. Consequently, $E_\rho$ is polystable.
\end{proof}

We may now ask whether the polystable vector bundle $E_\rho$ in Theorem \ref{thm1}
is stable. The following lemma shows that $E_\rho$ is not stable in general.

Let
\begin{equation}\label{e7}
\delta\, :\, \Gamma\, \hookrightarrow\, \text{SL}(2, {\mathbb C})
\end{equation}
be the inclusion map. This homomorphism $\delta$ is clearly
irreducible. Let $(E_\delta\, ,\nabla^\delta)$ be the
corresponding flat holomorphic vector bundle on $M$.

\begin{lemma}\label{lem1} 
The above holomorphic vector bundle $E_\delta$ is holomorphically trivial.
\end{lemma}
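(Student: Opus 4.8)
The plan is to write down an explicit holomorphic trivialization of $E_\delta$, exploiting the fact that $\delta$ is nothing but the restriction to $\Gamma$ of the standard (identity) representation of the ambient group $\text{SL}(2,{\mathbb C})$ on ${\mathbb C}^2$; here $r\,=\,2$. Recall from the construction in \eqref{ta} that $E_\delta$ is the quotient of $\text{SL}(2,{\mathbb C})\times {\mathbb C}^2$ by the $\Gamma$--action $\gamma\cdot (z\, ,v)\,=\,(z\gamma\, ,\gamma^{-1}v)$, where $\gamma^{-1}$ acts on $v$ through the standard representation.

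First I would introduce the holomorphic automorphism $\Phi$ of $\text{SL}(2,{\mathbb C})\times {\mathbb C}^2$ defined by $\Phi(z\, ,v)\,=\,(z\, ,zv)$, where $zv$ denotes the matrix $z$ applied to the vector $v$. Since $\det z\,=\,1$, the entries of $z^{-1}$ are polynomials in the entries of $z$, so $\Phi$ is a biholomorphism with inverse $(z\, ,w)\,\longmapsto\,(z\, ,z^{-1}w)$.

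The key step is to verify that $\Phi$ intertwines the twisting $\Gamma$--action on the source with the \emph{trivial} $\Gamma$--action $\gamma\cdot(z\, ,w)\,=\,(z\gamma\, ,w)$ on the target, in which $\Gamma$ acts only on the $\text{SL}(2,{\mathbb C})$--factor by right translation. This is a one--line computation: $\Phi(\gamma\cdot(z\, ,v))\,=\,\Phi(z\gamma\, ,\gamma^{-1}v)\,=\,(z\gamma\, ,z\gamma\gamma^{-1}v)\,=\,(z\gamma\, ,zv)\,=\,\gamma\cdot\Phi(z\, ,v)$. The cancellation $z\gamma\gamma^{-1}\,=\,z$ is precisely what makes the argument work, and it is the whole point of the lemma: the clutching data of $E_\delta$ is absorbed into the standard action of $\text{SL}(2,{\mathbb C})$ on ${\mathbb C}^2$.

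Finally, since the quotient of $\text{SL}(2,{\mathbb C})\times {\mathbb C}^2$ by this trivial $\Gamma$--action is manifestly the product $M\times {\mathbb C}^2$, the equivariant biholomorphism $\Phi$ descends to a holomorphic isomorphism $E_\delta\,\cong\, M\times {\mathbb C}^2$, exhibiting $E_\delta$ as holomorphically trivial. There is no serious obstacle here; the only point requiring care is matching the direction of the $\Gamma$--action (right translation on $\text{SL}(2,{\mathbb C})$ together with $\rho(\gamma^{-1})$ on ${\mathbb C}^r$) against the convention used in the construction of $E_\rho$, so that the cancellation in the equivariance check comes out correctly.
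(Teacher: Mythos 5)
Your proposal is correct and is essentially identical to the paper's own proof: both use the map $(z\,,v)\,\longmapsto\,(z\,,zv)$ on $\text{SL}(2,{\mathbb C})\times{\mathbb C}^2$ and observe that it descends to a holomorphic isomorphism $E_\delta\,\longrightarrow\, M\times{\mathbb C}^2$. Your write-up simply makes explicit the equivariance computation $\Phi(z\gamma\,,\gamma^{-1}v)\,=\,(z\gamma\,,zv)$ that the paper leaves implicit.
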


\begin{proof}
Recall that the vector bundle $E_\delta$ is a quotient of $\text{SL}(2,
{\mathbb C})\times{\mathbb C}^2$. Consider the holomorphic map
$$
\text{SL}(2,{\mathbb C})\times {\mathbb C}^2\,\longrightarrow\,
\text{SL}(2,{\mathbb C})\times{\mathbb C}^2
$$
defined by $(g\, ,v)\,\longmapsto\, (g\, ,g(v))$. This map descends to a
holomorphic isomorphism of vector bundles
$$
E_\delta\,\longrightarrow\, M\times {\mathbb C}^2
$$
over $M$. Therefore, this descended homomorphism provides
a holomorphic trivialization of $E_\delta$.
\end{proof}


\end{document}